\newtheorem{thm}{Theorem}
\newtheorem{prop}{Proposition}
\newtheorem{lem}{Lemma}
\begin{document}

\title{Generalized asymptotic Sidon basis}
\author{S\'andor Z. Kiss \thanks{Institute of Mathematics, Budapest
University of Technology and Economics, H-1529 B.O. Box, Hungary;
kisspest@cs.elte.hu;
This author was supported by the National Research, Development and Innovation Office NKFIH Grant No. K115288 and K129335. 
This paper was supported by the J\'anos Bolyai Research Scholarship of the Hungarian Academy of Sciences. Supported by the \'UNKP-18-4 New National Excellence Program of the Ministry of 
Human Capacities. Supported by the \'UNKP-19-4 New National Excellence Program 
of the Ministry for Innovation and Technology}, Csaba
S\'andor \thanks{Institute of Mathematics, Budapest University of
Technology and Economics, MTA-BME Lendület Arithmetic Combinatorics Research Group H-1529 B.O. Box, Hungary, csandor@math.bme.hu.
This author was supported by the NKFIH Grants No. K129335. Research supported by the Lendület program of the Hungarian Academy of Sciences (MTA), under grant number LP2019-15/2019.} 
}
\date{}
\maketitle

\begin{abstract}
\noindent Let $h,k \ge 2$ be integers. We
say a set $A$ of positive integers is an asymptotic basis of order
$k$ if every large enough positive integer can be represented as the sum of
$k$ terms from $A$. A set of positive integers $A$ is
called $B_{h}[g]$ set if all positive integers can be represented as the sum
of $h$ terms from $A$ at most $g$ times. In this paper we prove the
existence of $B_{h}[1]$ sets which are asymptotic bases of order $2h+1$
by using probabilistic methods. 

{\it 2010 Mathematics Subject Classification:} 11B34, 11B75.

{\it Keywords and phrases:}  additive number theory, general
sequences, additive representation function.
\end{abstract}
\section{Introduction}

Let $\mathbb{N}$ denote the set of positive integers. Let $h, k \ge 2$ be
integers. Let $A \subset \mathbb{N}$ be an infinite set
of positive integers and let $R_{h,A}(n)$ denote
the number of solutions of the equation 
\begin{equation}
a_{1} + a_{2} + \dots + a_{h} = n, \hspace*{3mm} a_{1} \in
A, \dots, a_{h} \in A, \hspace*{3mm} a_{1} \le
a_{2} \le \dots{} \le a_{h},
\end{equation}
\noindent where $n \in \mathbb{N}$. A set of positive integers $A$ is
called $B_h[g]$ set if for every $n \in \mathbb{N}$, the number of 
representations of $n$ as the sum of $h$ terms in the form (1) is at most $g$,
that is $R_{h,A}(n) \le g$. We denote the fact that $A$ is a $B_h[g]$ set by $A \in B_h[g]$.    
We say a set $A \subset \mathbb{N}$ is an asymptotic basis
of order $k$, if $R_{k,A}(n) > 0$ for all large enough positive
integer $n$, i.e., if there exists a positive integer $n_{0}$ such that
$R_{k,A}(n) > 0$ for $n > n_{0}$. In [4] and [5] P. Erd\H{o}s,
A. S\'ark\"ozy and V. T. S\'os asked if there exists a Sidon set (or
$B_2[1]$ set) which is an asymptotic basis of order 3. 
It is easy to see that a Sidon set cannot be an asymptotic basis of
order 2. J. M. Deshouillers and A. Plagne in
[3] constructed a Sidon set which is an asymptotic basis of order at most
7. In [7] it was proved the existence  of Sidon sets which are asymptotic bases of
order 5 by using probabilistic methods. In [1] and [9] this result was 
improved on by proving the existence of a Sidon set which is an asymptotic basis
of order 4. It was also proved [1] that there exists a $B_2[2]$ set which is 
an asymptotic basis of order 3. 
In this paper we will prove a similar but more general theorem. Namely, we prove the existence of an asymptotic basis of order $2h+1$ which is a $B_{h}[1]$ set.
 
\begin{thm}
For every $h \ge 2$ integer there exists a
$B_{h}[1]$ set which is an asymptotic basis of order $2h+1$. 
\end{thm}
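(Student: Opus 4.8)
The plan is to use the probabilistic method in the model of Erd\H{o}s and R\'enyi: let $\Omega$ be the probability space in which each $n \in \mathbb{N}$ is placed in the random set $A$ independently with probability $p_n = c\,n^{-\gamma}$, where $c>0$ is a constant and the exponent is chosen in the range $\frac{2h-1}{2h} < \gamma < \frac{2h}{2h+1}$ (this interval is nonempty for every $h \ge 2$ since $1-\frac{1}{2h} < 1-\frac{1}{2h+1}$). The two defining requirements pull in opposite directions: being a $B_h[1]$ set forces $A$ to be thin, while being an asymptotic basis of order $2h+1$ forces it to be dense, and the purpose of the window above is precisely to reconcile them. Heuristically, the expected number of elements of $A$ up to $N$ is of order $N^{1-\gamma}$, the expected value of $R_{2h+1,A}(n)$ is of order $n^{2h-(2h+1)\gamma}$, and the expected number of nontrivial coincidences $a_1+\dots+a_h = b_1+\dots+b_h$ (with $\{a_i\}\neq\{b_j\}$) among elements up to $N$ is of order $N^{2h(1-\gamma)-1}$. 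The condition $\gamma < \frac{2h}{2h+1}$ makes the second quantity tend to infinity, while $\gamma > \frac{2h-1}{2h}$ makes the third quantity summable over dyadic scales, so that the expected total number of coincidences over all of $\mathbb{N}$ is finite.

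First I would establish the basis property in a robust form: for every fixed integer $C \ge 0$, almost surely the truncated set $A \cap (C,\infty)$ is an asymptotic basis of order $2h+1$. Writing $R^{(C)}_{2h+1}(n)$ for the number of representations of $n$ using only parts exceeding $C$, a routine computation gives $\mathbb{E}\big[R^{(C)}_{2h+1}(n)\big] \gg n^{2h-(2h+1)\gamma}$, which tends to infinity since $\gamma < \frac{2h}{2h+1}$. I would then apply Janson's inequality (equivalently, a second-moment argument controlling the correlation between representations that share a common part) to bound $\mathbb{P}\big(R^{(C)}_{2h+1}(n) = 0\big)$ by a summable function of $n$, and conclude via the Borel--Cantelli lemma that almost surely $R^{(C)}_{2h+1}(n) > 0$ for all large $n$. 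Intersecting these probability-one events over the countably many values of $C$ produces a single event of probability one on which $A \cap (C,\infty)$ is an asymptotic basis of order $2h+1$ for \emph{every} $C$ simultaneously.

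Next I would secure the $B_h[1]$ property by deletion. Let $X$ be the total number of nontrivial solutions of $a_1+\dots+a_h = b_1+\dots+b_h$ with all parts in $A$ and $\{a_i\}\neq\{b_j\}$. Splitting these solutions according to the number $t$ of distinct integers they involve ($2 \le t \le 2h$) and summing the corresponding products of probabilities, the inequality $\gamma > \frac{2h-1}{2h}$ yields $\mathbb{E}[X] < \infty$, whence $X < \infty$ almost surely. Thus, almost surely, only finitely many integers have more than one representation as a sum of $h$ terms, and the finitely many parts occurring in these coincidences lie in some interval $[1,C]$. Choosing a finite set $F \subseteq [1,C]$ that meets every coincidence — i.e. removing at least one of the $2h$ parts from each such solution — guarantees that no integer retains two distinct $h$-representations, so $A' = A \setminus F \in B_h[1]$. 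Since $A' \supseteq A \cap (C,\infty)$, the second paragraph shows that $A'$ is still an asymptotic basis of order $2h+1$. On the intersection of the two probability-one events the set $A'$ enjoys both properties, which proves the theorem.

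The main obstacle is the concentration step for the basis property: the indicator events ``$n$ admits a prescribed representation'' are far from independent, since any two representations sharing a summand are positively correlated, so controlling the lower tail of $R^{(C)}_{2h+1}(n)$ requires a careful estimate showing that the dependency sum in Janson's inequality is dominated by the square of the expectation. This is the technical heart of the argument; by contrast, checking that the window for $\gamma$ is nonempty and that $\mathbb{E}[X] < \infty$ reduces to the elementary exponent bookkeeping indicated above. I would also take care that the estimates for the expectations are uniform enough in $n$ to feed cleanly into the Borel--Cantelli lemma.
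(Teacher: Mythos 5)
Your overall architecture (random set with $p_n=cn^{-\gamma}$, a lower bound on $R_{2h+1}$ via Janson plus Borel--Cantelli, then a finite deletion to kill multiple $h$-representations) is reasonable, and your exponent window $\frac{2h-1}{2h}<\gamma<\frac{2h}{2h+1}$ is correct and in fact contains the paper's choice $\gamma=\frac{4h-1}{4h+1}$. The device of intersecting, over all $C$, the probability-one events ``$A\cap(C,\infty)$ is a basis of order $2h+1$'' is also a legitimate (and arguably cleaner) substitute for the paper's final pigeonhole argument. But there is a genuine gap in the $B_h[1]$ step: the claim $\mathbb{E}[X]<\infty$ is false for every $h\ge 3$. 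The problem is that two distinct $h$-representations of the same $n$ may \emph{share} summands, and such coincidences can be padded arbitrarily. Concretely, if $a_1+a_2=b_1+b_2$ with $\{a_1,a_2\}\ne\{b_1,b_2\}$ and all four elements in $A$ (an event of positive probability, e.g.\ $\{1,2,3,4\}\subseteq A$), then for every choice of $c_3,\dots,c_h\in A$ the tuples $(a_1,a_2,c_3,\dots,c_h)$ and $(b_1,b_2,c_3,\dots,c_h)$ form a nontrivial solution of your equation; since $A$ is infinite this already gives $X=\infty$ with positive probability, hence $\mathbb{E}[X]=\infty$. In the first-moment computation this shows up as a factor $\sum_d p_d=\sum_d d^{-\gamma}=\infty$ attached to every coincidence class with a shared summand, so the ``splitting according to the number $t$ of distinct integers'' does not rescue the estimate --- it is precisely the classes with $t<2h$ coming from shared summands that diverge. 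Consequently you also cannot conclude that only finitely many integers have two $h$-representations, nor that all offending parts lie in a fixed interval $[1,C]$; your argument as written only covers $h=2$, where distinct $2$-representations of the same integer are automatically disjoint.

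The repair is essentially what the paper does, and it changes the shape of this step. One must work with \emph{pairwise disjoint} representations: for each $2\le k\le h$ the expected number of pairs of disjoint $k$-representations of the same integer is $\asymp\sum_n n^{2k-2-2k\gamma}$, which is finite precisely when $\gamma>1-\frac{1}{2k}$, and this holds for all $k\le h$ under your hypothesis $\gamma>1-\frac{1}{2h}$. Hence almost surely only finitely many such disjoint coincidences occur at each level, their parts are bounded by some $C$, and after deleting $A\cap[1,C]$ no integer has two disjoint $k$-representations for any $2\le k\le h$. One then bootstraps combinatorially (the paper's Proposition 2, i.e.\ $B_k^{*}[1]\cap B_{k-1}[1]\subseteq B_k[1]$, applied inductively from $k=2$ up to $k=h$): two distinct $k$-representations of $n$ are either disjoint, which is excluded, or share a summand $d$, in which case cancelling $d$ produces two distinct $(k-1)$-representations of $n-d$, excluded by induction. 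With this hierarchical count in place of your single quantity $X$, the rest of your plan goes through; without it, the deletion step has no justification for $h\ge 3$.
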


Before we prove the above theorem, we give a short survey of the probabilistic
method we are working with.  

\section{Probabilistic tools}

To prove Theorem 1 we use the probabilistic method due to Erd\H{o}s
and R\'enyi. There is an excellent summary of this method in the book of
Halberstam and Roth [6]. In this paper we denote the probability of an event
by $\mathbb{P}$, and the expectation of a random variable $Y$ 
by $\mathbb{E}(Y)$. Let $\Omega$ denote the set of the strictly increasing 
sequences of positive integers. 

\begin{lem}
Let 
\[
\alpha_{1}, \alpha_{2}, \alpha_{3} \dots{} 
\]
be real numbers satisfying 
\[
0 \le \alpha_{n} \le 1 \hspace*{4mm} (n = 1, 2, \dots{}).
\]
\noindent Then there exists a probability space ($\Omega$, $X$, $\mathbb{P}$) with the
following two properties:
\begin{itemize}
\item[(i)] For every natural number $n$, the event $E^{(n)} =
	   \{A$:
  $A \in \Omega$, $n \in A\}$ is measurable, and
  $\mathbb{P}(E^{(n)}) = \alpha_{n}$.
\item[(ii)] The events $E^{(1)}$, $E^{(2)}, \dots{}$
	    are independent. 
\end{itemize}
\end{lem}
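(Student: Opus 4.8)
The plan is to realize $\Omega$ as a space of $0$–$1$ sequences and to equip it with an infinite product of Bernoulli measures, so that the $n$-th coordinate records whether $n$ lies in the random set. Concretely, I identify each strictly increasing sequence $A \in \Omega$ with its indicator $\omega = (\omega_1, \omega_2, \dots) \in \{0,1\}^{\mathbb{N}}$, where $\omega_n = 1$ if $n \in A$ and $\omega_n = 0$ otherwise; this is a bijection between $\Omega$ and $\{0,1\}^{\mathbb{N}}$, and under it the event $E^{(n)}$ becomes the coordinate set $\{\omega : \omega_n = 1\}$. On the two-point space $\{0,1\}$ I place the Bernoulli measure $\mu_n$ determined by $\mu_n(\{1\}) = \alpha_n$ and $\mu_n(\{0\}) = 1 - \alpha_n$; this is a genuine probability measure precisely because $0 \le \alpha_n \le 1$.

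Next I take $X$ to be the $\sigma$-algebra on $\{0,1\}^{\mathbb{N}}$ generated by the cylinder sets, i.e.\ the product $\sigma$-algebra, and let $\mathbb{P} = \bigotimes_{n \ge 1} \mu_n$ be the corresponding infinite product measure. Its existence is guaranteed by Kolmogorov's extension theorem: for each $N$ the finite-dimensional distribution on the first $N$ coordinates is the product $\mu_1 \otimes \dots \otimes \mu_N$, and these are manifestly consistent under the projections that forget the last coordinate, since $\mu_{N+1}(\{0,1\}) = 1$. Hence there is a unique probability measure $\mathbb{P}$ on $(\{0,1\}^{\mathbb{N}}, X)$ whose marginal on any finite set of coordinates is the corresponding finite product of Bernoulli measures.

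With $\mathbb{P}$ in hand, both properties follow directly from the definition of the product measure. For (i), the event $E^{(n)} = \{\omega_n = 1\}$ is a cylinder, hence measurable, and its probability is the one-dimensional marginal $\mu_n(\{1\}) = \alpha_n$. For (ii), given any finite index set $n_1 < \dots < n_r$, the intersection $E^{(n_1)} \cap \dots \cap E^{(n_r)}$ is the cylinder fixing those coordinates to $1$, and the product structure gives $\mathbb{P}(E^{(n_1)} \cap \dots \cap E^{(n_r)}) = \prod_{j=1}^r \alpha_{n_j} = \prod_{j=1}^r \mathbb{P}(E^{(n_j)})$, which is exactly the independence of the family $E^{(1)}, E^{(2)}, \dots$.

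The only genuine content lies in asserting the existence of the infinite product measure, that is, in the step where consistency of the finite-dimensional distributions is upgraded to a countably additive measure on the full product $\sigma$-algebra; this is the point that requires Kolmogorov's theorem (or, equivalently, a Carath\'eodory extension from the algebra of cylinders together with a continuity-at-$\emptyset$ argument exploiting the compactness of $\{0,1\}^{\mathbb{N}}$ in the product topology). Everything else is bookkeeping. I would therefore isolate that existence statement as the crux and invoke it from the standard references (such as Halberstam and Roth), after which the verification of (i) and (ii) is routine.
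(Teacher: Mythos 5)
Your proposal is correct: the product Bernoulli measure on $\{0,1\}^{\mathbb{N}}$, constructed via Kolmogorov extension (or Carath\'eodory extension from the cylinder algebra), is exactly the standard argument, and your verification of (i) and (ii) from the product structure is sound. The paper itself offers no proof at all --- it simply cites Theorem 13 of Halberstam and Roth [6] --- and your construction is precisely the content of that cited theorem, so you have filled in the details the paper delegates to the reference rather than taken a different route.
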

See Theorem 13. in [6], p. 142. 
We denote the characteristic function of the event $E^{(n)}$
by $\varrho(A, n)$: 
\[
\varrho(A, n) = 
\left\{
\begin{aligned}
1 \textnormal{, if } n \in A \\
0 \textnormal{, if } n \notin A.
\end{aligned} \hspace*{3mm}
\right.
\]

\noindent Furthermore, for some $A = \{a_1, a_2, \dots{}\} \in
\Omega$ we denote the number of solutions of
$a_{i_{1}} + a_{i_{2}} + \dots{} + a_{i_{h}} = n$ with 
$a_{i_{1}} \in A$, $a_{i_{2}} \in A, \dots{} ,a_{i_{h}}
\in A$, $1 \le a_{i_{1}} < a_{i_{2}} \dots{}
< a_{i_{h}} < n$ by $r_{h}(n)$.  
Then 
\begin{equation}
r_{h,A}(n) = r_{h}(n) = \sum_{\overset{(a_{1}, a_{2},
 \dots{}, a_{h}) \in \mathbb{N}^{h}}{1 \le a_{1} < \dots{} < a_{h} <
 n}\atop {a_{1} + a_{2} + \dots{} + a_{h} =
    n}}\varrho(A, a_{1})\varrho(A, a_{2}) \dots{}
\varrho(A, a_{h}).
\end{equation}
\noindent 
Let $R_{h}^{*}(n)$ denote the number of those representations of $n$
in the form (1) in which there are at least two  equal terms. Thus we have 
\begin{equation}
R_{h,A}(n) = r_{h}(n) + R_{h}^{*}(n).
\end{equation}

\noindent In the proof of Theorem 1 we use the following lemma:

\begin{lem}(Borel-Cantelli)
Let $X_{1}, X_{2}, \dots{}$ be a sequence of events in a probability space. If 
\[
\sum_{j=1}^{+\infty}\mathbb{P}(X_{j}) < \infty,
\]

\noindent then with probability 1, at most a finite number of the events
$X_{j}$ can occur.
\end{lem}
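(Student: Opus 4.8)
The plan is to show directly that the event ``infinitely many of the $X_j$ occur'' has probability zero, which is logically equivalent to the stated conclusion that with probability $1$ only finitely many of the $X_j$ can occur. First I would express the event that infinitely many of the $X_j$ happen as the set-theoretic limit superior
\[
X = \bigcap_{n=1}^{\infty} \bigcup_{j=n}^{\infty} X_{j}.
\]
Indeed, a sample point lies in $X$ precisely when it belongs to $X_{j}$ for arbitrarily large indices $j$, i.e.\ when infinitely many of the events occur. Since each $X_{j}$ is an event (a measurable set) and the operations involved are only countable unions and intersections, $X$ is itself measurable, so that $\mathbb{P}(X)$ is well defined.

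Next I would exploit monotonicity and countable subadditivity of the probability measure $\mathbb{P}$. For every fixed $n$ we have the inclusion $X \subseteq \bigcup_{j=n}^{\infty} X_{j}$, because the intersection in the definition of $X$ is contained in each of its factors. Consequently
\[
\mathbb{P}(X) \le \mathbb{P}\Bigl(\bigcup_{j=n}^{\infty} X_{j}\Bigr) \le \sum_{j=n}^{\infty} \mathbb{P}(X_{j}),
\]
where the first inequality is monotonicity and the second is countable subadditivity applied to the tail union.

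Finally I would let $n \to \infty$. Because the hypothesis guarantees that the series $\sum_{j=1}^{\infty} \mathbb{P}(X_{j})$ converges, its tail $\sum_{j=n}^{\infty} \mathbb{P}(X_{j})$ tends to $0$ as $n \to \infty$. Since the left-hand side $\mathbb{P}(X)$ does not depend on $n$, the displayed chain of inequalities forces $\mathbb{P}(X) = 0$. Passing to the complementary event, the set of those $A$ for which only finitely many of the $X_{j}$ occur has probability $1$, which is exactly the assertion of the lemma.

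This argument presents no genuine obstacle; the only points requiring care are the correct identification of the event ``infinitely many $X_{j}$ occur'' with the $\limsup$ set $X$, and the legitimacy of applying countable subadditivity to the infinite tail union, both of which rest on the countable additivity of the measure $\mathbb{P}$ supplied by the underlying probability space.
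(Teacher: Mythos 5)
Your proof is correct and complete: the identification of the event ``infinitely many $X_j$ occur'' with the $\limsup$ set $\bigcap_{n}\bigcup_{j\ge n}X_j$, followed by monotonicity, countable subadditivity, and the vanishing of the tails of a convergent series, is exactly the standard argument. The paper itself gives no proof at all --- it simply cites Halberstam and Roth [6], p.~135, where essentially this same argument appears --- so your write-up supplies in full what the paper delegates to the reference.
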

\noindent See [6], p. 135.

\section{Proof of Theorem 1}

Let $h$ be fixed and let $\alpha = \frac{2}{4h+1}$. Define the sequence
$\alpha_{n}$ in Lemma 1 by

\[
\alpha_{n} = \frac{1}{n^{1-\alpha}},
\]

\noindent so that $\mathbb{P}(\{A$:
  $A \in \Omega$, $n \in A\}) =
  \frac{1}{n^{1-\alpha}}$. The proof of Theorem 1 consists of three parts. In
  the first part we prove similarly as in [8] that with probability 1,
  $A$ is an asymptotic basis of order $2h + 1$. In particular, we show that $R_{2h+1,A}(n)$ tends to infinity as $n$ goes to infinity. In the second part we show that deleting finitely many elements from $A$ we obtain a $B_{h}[1]$ set. 
Finally, we show that the above deletion does not destroy the asymptotic basis property.
\\ 
By (3), to prove that $A$ is an asymptotic basis of order $2h + 1$ it is enough to show $r_{2h+1,A}(n) > 0$ for every $n$ large enough. To do this, we 
apply the following lemma with $k = 2h+1$.

\begin{lem}
Let $k \ge 2$ be a fixed integer and let $\mathbb{P}(\{A$:
  $A \in \Omega$, $n \in A\}) =
\frac{1}{n^{1-\alpha}}$ where $\alpha > \frac{1}{k}$. Then with probability 1,
 $r_{k,A}(n) > cn^{k\alpha-1}$ for every sufficiently large $n$,  
where $c = c(\alpha,k)$ is a positive constant. 
\end{lem}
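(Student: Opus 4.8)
The plan is to show that $r_{k,A}(n)$ has expectation of order $n^{k\alpha-1}$ and is strongly concentrated around this expectation, then invoke Borel–Cantelli to upgrade "with high probability for each $n$" to "with probability $1$ for all large $n$." I would begin by computing $\mathbb{E}(r_{k,A}(n))$. Since $r_{k,A}(n)$ is a sum over tuples $1 \le a_1 < \dots < a_k < n$ with $a_1 + \dots + a_k = n$ of the products $\varrho(A,a_1)\cdots\varrho(A,a_k)$, independence of the events $E^{(a_i)}$ gives
\begin{equation}
\mathbb{E}(r_{k,A}(n)) = \sum_{\substack{1 \le a_1 < \dots < a_k < n \\ a_1 + \dots + a_k = n}} \frac{1}{a_1^{1-\alpha}} \cdots \frac{1}{a_k^{1-\alpha}}.
\end{equation}
The main computational step is to estimate this sum. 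Approximating the sum by the corresponding integral over the simplex $\{x_1 + \dots + x_k = n,\ x_i > 0\}$, each factor contributes $x_i^{\alpha-1}$, and a scaling $x_i = n y_i$ shows the integral is of order $n^{k(\alpha-1)} \cdot n^{k-1} = n^{k\alpha-1}$ (the $n^{k-1}$ from the $(k-1)$-dimensional volume element). Thus $\mathbb{E}(r_{k,A}(n)) \sim c_1 n^{k\alpha-1}$ for a positive constant $c_1 = c_1(\alpha,k)$; the hypothesis $\alpha > 1/k$ guarantees this exponent is positive so the expectation grows.

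Next I would control the variance, or more robustly the deviation, to show $r_{k,A}(n)$ concentrates. The natural approach is a second-moment or Chebyshev-type bound: estimate $\mathbb{E}(r_{k,A}(n)^2) - \mathbb{E}(r_{k,A}(n))^2$ and show it is small compared to $\mathbb{E}(r_{k,A}(n))^2$. Writing $r_{k,A}(n)^2$ as a double sum over two tuples, the covariance contributions come only from pairs of tuples sharing at least one common summand (disjoint tuples are independent and contribute zero to the covariance). The dominant error term arises from pairs sharing exactly one element, and one expects this to be of order $n^{2k\alpha - 2}/n^{\alpha}$-type, i.e. a lower-order power than $\mathbb{E}^2 \asymp n^{2k\alpha-2}$. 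Summing the probabilities that $|r_{k,A}(n) - \mathbb{E}(r_{k,A}(n))| > \tfrac{1}{2}\mathbb{E}(r_{k,A}(n))$ over $n$ via Chebyshev should give a convergent series, so Lemma 2 (Borel–Cantelli) yields that with probability $1$ only finitely many such deviation events occur. Consequently $r_{k,A}(n) \ge \tfrac{1}{2}\mathbb{E}(r_{k,A}(n)) > c n^{k\alpha-1}$ for all sufficiently large $n$, with probability $1$.

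The main obstacle I anticipate is the variance estimate: bounding the covariance sum over overlapping tuples cleanly enough that Chebyshev gives a summable bound. One must carefully classify pairs of tuples by the size of their intersection and check that every category beyond the diagonal is genuinely lower order than $\mathbb{E}^2$. A subtlety is that Chebyshev alone may give a bound on $\mathbb{P}(|r - \mathbb{E}r| > \tfrac{1}{2}\mathbb{E}r)$ that decays only polynomially in $n$; one needs this decay to beat $1$ so the sum over $n$ converges, which requires the variance to be smaller than $\mathbb{E}^2$ by a factor that is a positive power of $n$. If the plain variance bound is too weak for summability, I would instead pass to higher moments or use a sharper concentration inequality, but I expect the covariance decomposition to already deliver a saving of a positive power of $n$, which suffices.
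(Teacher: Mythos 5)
The paper does not prove this lemma at all --- it cites reference [8] for the proof --- so the comparison here is between your plan and the standard argument that [8] (following Erd\H{o}s--Tetali) carries out. Your first step is fine: the expectation computation via independence and the integral over the simplex does give $\mathbb{E}(r_{k,A}(n)) \sim c_1 n^{k\alpha-1}$ with $c_1 = \Gamma(\alpha)^k/(k!\,\Gamma(k\alpha))$ up to lower-order corrections, and $\alpha > 1/k$ is exactly what makes this tend to infinity.

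The genuine gap is in the concentration step, and it is not the minor technicality you hope it is. The variance of $r_{k,A}(n)$ is bounded \emph{below} by the diagonal contribution $\sum_{\bar a} \prod_i p_{a_i}(1-\prod_i p_{a_i}) \gg \mathbb{E}(r_{k,A}(n))$, and all covariance terms are nonnegative, so
\[
\frac{\mathrm{Var}(r_{k,A}(n))}{\mathbb{E}(r_{k,A}(n))^2} \gg \frac{1}{\mathbb{E}(r_{k,A}(n))} \asymp n^{-(k\alpha-1)} .
\]
Hence the best Chebyshev can ever give for the deviation probability is $\gg n^{-(k\alpha-1)}$, and $\sum_n n^{-(k\alpha-1)}$ converges only when $k\alpha - 1 > 1$, i.e.\ $\alpha > 2/k$. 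The lemma is stated for all $\alpha > 1/k$, and in the application of this paper one has $\alpha = \tfrac{2}{4h+1}$ and $k = 2h+1$, so $k\alpha - 1 = \tfrac{1}{4h+1}$, far below $1$; your series diverges badly. Your remark that "a saving of a positive power of $n$ suffices" is the precise point of failure: you need a saving exceeding the first power of $n$, and no refinement of the covariance classification will produce it, because the obstruction is the diagonal term, not the overlapping pairs. The standard repair --- and what the cited proof actually does --- is to abandon the second moment for the lower tail and prove an exponential bound $\mathbb{P}\bigl(r_{k,A}(n) \le \tfrac12 \mathbb{E}(r_{k,A}(n))\bigr) \le e^{-c\,\mathbb{E}(r_{k,A}(n))}$ via a correlation inequality (Janson's inequality, or the Erd\H{o}s--Tetali disjoint-representations argument); since $\mathbb{E}(r_{k,A}(n)) \asymp n^{k\alpha-1}$ grows polynomially, these bounds are summable and Borel--Cantelli applies. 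Your fallback sentence gestures at "higher moments or a sharper concentration inequality," but that is where the entire difficulty of the lemma lives, so as written the proposal does not constitute a proof.
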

\noindent The proof of Lemma 3 can be found in [8]. It is clear from (3) that 
\begin{equation}
\mathbb{P}(\mathcal{E}) = 1,
\end{equation}
where $\mathcal{E}$ denotes the event
\[
\mathcal{E} = \{A: A \in \Omega, \exists n_{0}(A) = n_{0} \hspace*{1mm} such \hspace*{1mm} that  \hspace*{1mm}
			 R_{2h+1,A}(n) \ge cn^{\frac{1}{4h+1}}  \hspace*{1mm} for \hspace*{1mm} n>n_{0}\},
\]
where $c$ is a suitable positive constant.
In the next step we prove that removing finitely elements from $A$ we get a $B_{h}[1]$ set with probability 1. To do this, it is enough to show that with probability 1, $R_{h,A}(n) \le 1$ for every $n$ large enough.
Note that in a representation of $n$ as the sum of $h$ terms there can be equal summands. To handle this situation we consider the terms of a representation $a_{1} + \dots{} + a_{h} = n$
as a vector $(a_{1}, \dots{}, a_{h}) \in \mathbb{N}^{h}$. 
We denote the set which elements are the coordinates of the vector
$\bar{x}$ as $Set(\bar{x})$. Of course, if two or more coordinates of $\bar{x}$ are equal, this value appears only once in
$Set(\bar{x})$.
We say that two vectors $\bar{x}$ and $\bar{y}$ are disjoint if
$Set(\bar{x})$ and $Set(\bar{y})$ are disjoint sets. We define
$r^*_{l, A} (n)$ as the maximum number of pairwise disjoint
representations of $n$ as sum of $l$ elements of $A$, i.e., the
maximum number of pairwise disjoint vectors of $R_{l}(n)$ with
their coordinates in $A$. We say that $A$ is a $B^*_l[g]$ sequence
if $r^*_{l,A}(n)\le g$ for every $n$.

\begin{lem}
Let $\mathbb{P}(\{A$:$A \in \Omega$, $n \in A\}) = \frac{1}{n^{1-\alpha}}$, where
$\alpha = \frac{2}{4h+1}$. 
\begin{itemize}
\item[(i)] For every $2 \le k \le h$ almost always there exists a finite set $A_{k}$ such that $r^*_{k, A\setminus A_{k}}(n) \le 1$.
\item[(ii)] For every $h+1 \le k \le 2h$ almost always there exists a finite set $A_{k}$ such that $r^*_{k, A\setminus A_{k}}(n) \le 4h+1$.
\end{itemize}
\end{lem}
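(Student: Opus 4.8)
The plan is to combine a first-moment estimate with the Borel--Cantelli lemma (Lemma 2). Fix $k$ and the target multiplicity $g$, where $g=1$ in case (i) and $g=4h+1$ in case (ii), and consider the events $X_n=\{A\in\Omega: r^*_{k,A}(n)\ge g+1\}$. Once I show $\sum_{n}\mathbb{P}(X_n)<\infty$, Lemma 2 gives that almost always only finitely many $n$ satisfy $r^*_{k,A}(n)\ge g+1$. Writing $N$ for the largest such $n$ (and $N=0$ if there is none), the finite set $A_k=A\cap[1,N]$ then works: every summand in a representation of some $n\le N$ is smaller than $n\le N$, so deleting $A_k$ destroys all such representations and forces $r^*_{k,A\setminus A_k}(n)=0$, while for $n>N$ deletion cannot increase $r^*$, and there $r^*_{k,A}(n)\le g$ already holds.

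To bound $\mathbb{P}(X_n)$ I would use the first moment. For a vector $\bar x=(a_1,\dots,a_k)$ with $a_1+\dots+a_k=n$ let $w(\bar x)=\prod_{v\in Set(\bar x)}v^{-(1-\alpha)}$; by the independence in Lemma 1 this is exactly the probability that all coordinates of $\bar x$ lie in $A$. Put $W_k(n)=\sum_{\bar x}w(\bar x)=\mathbb{E}(R_{k,A}(n))$. If $r^*_{k,A}(n)\ge g+1$ there are $g+1$ pairwise disjoint such vectors in $A$; because disjointness makes $\bigcup_i Set(\bar x^{(i)})$ a disjoint union, independence factors the probability that a fixed $(g+1)$-tuple lies in $A$ as $\prod_i w(\bar x^{(i)})$. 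Summing over all $(g+1)$-tuples and simply dropping the disjointness constraint gives the clean bound $\mathbb{P}(X_n)\le W_k(n)^{g+1}$.

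The heart of the matter is the moment estimate $W_k(n)\le C\,n^{k\alpha-1}$ for a constant $C=C(h,k)$. I would obtain it by isolating the largest summand $a_k\ge n/k$, whose factor is $a_k^{-(1-\alpha)}\le C\,n^{-(1-\alpha)}$, and then summing the remaining $k-1$ summands freely; since $\sum_{a\le n}a^{-(1-\alpha)}\asymp n^{\alpha}$, the all-distinct vectors contribute $n^{-(1-\alpha)}\cdot n^{(k-1)\alpha}=n^{k\alpha-1}$. Vectors with a repeated coordinate have at most $k-1$ distinct values, and the same computation shows they contribute only $O(n^{(k-1)\alpha-1})$, of strictly smaller order; hence $W_k(n)\asymp n^{k\alpha-1}$ is governed by the all-distinct term.

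Putting the pieces together yields $\mathbb{P}(X_n)\le C^{g+1}n^{(g+1)(k\alpha-1)}$, so everything reduces to verifying $(g+1)(k\alpha-1)<-1$. In case (i) this reads $2k\alpha<1$, i.e. $4k<4h+1$, which holds for every $k\le h$. In case (ii) it becomes $2k(4h+2)<(4h+1)^2$, and the worst case $k=2h$ is precisely $16h^2+8h<16h^2+8h+1$, true for all $h$. This last, barely-valid inequality is where I expect the real subtlety to lie: it reveals that the bound $4h+1$ and the exponent $\alpha=\frac{2}{4h+1}$ are tuned exactly to the edge of convergence, so the delicate point is the moment bound of the previous paragraph, and in particular the verification that the repeated-coordinate shapes stay of lower order. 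Granting that, the Borel--Cantelli step and the finite deletion are routine.
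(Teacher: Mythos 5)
Your proposal is correct and follows essentially the same route as the paper: the same thresholds ($g+1=2$ for $2\le k\le h$ and $g+1=4h+2$ for $h<k\le 2h$), the same verification that the resulting exponent is at most $-\frac{4h+2}{4h+1}<-1$, then Borel--Cantelli and deletion of a finite initial segment. The only difference is that the paper simply cites the key estimate $\mathbb{P}(r^*_{k,A}(n)\ge s)\le C_{k,\alpha,s}\,n^{s(k\alpha-1)}$ as Proposition 1 (Lemma 3.7 of [2]), whereas you reprove it via the first-moment and disjointness argument, which is indeed how that cited lemma is established.
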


\begin{proof}
We need the following proposition (see Lemma 3.7 in [2]).

\begin{prop} For a sequence $A \in \Omega$, for every $k$ and $n$
$$\mathbb{P}(r^*_{k, A}(n) \ge s) \le C_{k,\alpha, s} \  n^{(k\alpha - 1)s}$$
where $C_{k, \alpha, s}$ depends only on $k$, $\alpha$ and $s$.
\end{prop}

\noindent We apply Proposition 1 by $s = 2$. Then we have
$$\mathbb{P}(r^*_{k, A}(n) \ge 2) \le C_{k,\alpha} \  n^{2(k\alpha - 1)} = C_{k,\alpha} \  
n^{-\frac{8h-4k+2}{4h+1}}.$$
Since $2 \le k \le h$, we have 
$$\mathbb{P}(r^*_{k, A}(n) \ge 2) \le n^{-\frac{4h+2}{4h+1}}$$ then by the Borel-Cantelli lemma we get that almost always there exists an $n_{k}$ such that 
$r^*_{k, A}(n) \le 1$ for $n \ge n_{k}$. It follows that 

\[
r^*_{k, A\setminus A_{k}}(n) \le 1, 
\]
where $A_{k} = A \cap [0, n_{k}]$.

Assume that $h < k \le 2h$.
We apply Proposition 1 by $s = 4h+2$.  Then we have
$$\mathbb{P}(r^*_{k, A}(n) \ge 4h+2) \le C_{k,h,\alpha} \  n^{(4h+2)(k\alpha - 1)} 
= C_{k,\alpha} \  n^{-(2h+1)\frac{8h-4k+2}{4h+1}}.$$
Since $h < k \le 2h$, we have 
$$\mathbb{P}(r^*_{k, A}(n) \ge 4h+2) \le n^{-\frac{4h+2}{4h+1}}$$ then by the Borel-Cantelli lemma we get that almost always there exists an $n_{k}$ such that 
$r^*_{k, A}(n) \le 4h+1$ for $n \ge n_{k}$. It follows that

\[
r^*_{k, A\setminus A_{k}}(n) \le 4h+1, 
\]
where $A_{k} = A \cap [0, n_{k}]$.
\end{proof}
It follows from (4) and Lemma 4 that there exists a set $A$ and for every $2 \le k \le h$ finite sets $A_{k} \subset A$ such that
\begin{equation}
R_{2h+1,A}(n) \ge cn^{\frac{1}{4h+1}}
\end{equation}
for $n \ge n_{0}$ and for every $2 \le k \le h$,
\begin{equation}
r^*_{k, A\setminus A_{k}}(n) \le 1,
\end{equation}
for  every $h < k \le 2h$,
\begin{equation}
r^*_{k, A}(n) \le 4h+1.
\end{equation}
Set $B = A \setminus \cup_{k=1}^{2h}A_{k}$.
In the next step we show that $B$ is both a $B_{h}[1]$ set and a $B_{2h}[g]$ set for 
some $g$. We apply the following proposition (see Remark 3.10 in [2]).

\begin{prop} 
$$B^*_h[g] \cap B_{h-1}[l] \subseteq B_h[g(h(l-1)+1)].$$
\end{prop}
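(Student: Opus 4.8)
The plan is to fix a positive integer $n$ and to model the family of all $h$-term representations of $n$ as the vertex set of a graph, then to bound its size via a maximum independent set together with a maximum degree estimate. Let $\mathcal{R}$ denote the set of all nondecreasing tuples $\bar{x} = (a_{1}, \dots, a_{h})$ with entries in $A$ and $a_{1} + \dots + a_{h} = n$, so that $|\mathcal{R}| = R_{h,A}(n)$. Form the graph $G$ on vertex set $\mathcal{R}$ by joining $\bar{x}$ and $\bar{y}$ by an edge exactly when they are \emph{not} disjoint, i.e.\ when $Set(\bar{x}) \cap Set(\bar{y}) \ne \emptyset$. A set of pairwise disjoint representations is precisely an independent set of $G$, so the hypothesis $A \in B^{*}_{h}[g]$ gives that the independence number of $G$ is at most $g$.

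Next I would bound the maximum degree of $G$ using the $B_{h-1}[l]$ hypothesis. The key observation is that for any fixed value $a$, the representations of $n$ containing $a$ inject into the $(h-1)$-term representations of $n-a$: given $\bar{y} \in \mathcal{R}$ with $a \in Set(\bar{y})$, delete one coordinate equal to $a$ to obtain a nondecreasing $(h-1)$-tuple of elements of $A$ summing to $n-a$; this map is injective because reinserting $a$ recovers $\bar{y}$. Hence the number of representations of $n$ containing $a$ is at most $R_{h-1,A}(n-a) \le l$. Since a fixed $\bar{x} \in \mathcal{R}$ has $|Set(\bar{x})| \le h$ distinct values, and each value $a \in Set(\bar{x})$ lies in at most $l$ representations, one of which is $\bar{x}$ itself, a union bound over the at most $h$ values of $\bar{x}$ shows that $\bar{x}$ has at most $h(l-1)$ neighbours in $G$.

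Finally I would combine the two bounds via the standard covering estimate: any maximal independent set $I$ dominates every vertex outside it, and each of its vertices has at most $\Delta$ neighbours, so the total number of vertices is at most $|I| + |I|\cdot\Delta = |I|(\Delta + 1)$, where $\Delta$ is the maximum degree. Taking $|I| \le g$ and $\Delta \le h(l-1)$ yields $R_{h,A}(n) = |\mathcal{R}| \le g\big(h(l-1)+1\big)$, and since $n$ was arbitrary this gives $A \in B_{h}[g(h(l-1)+1)]$.

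The argument is essentially routine once the graph is set up; the only point demanding care is the treatment of representations with repeated coordinates. One must check that counting distinct values through $Set(\bar{x})$ rather than through coordinates is exactly what makes the degree bound $h(l-1)$, and that the deletion map in the degree step stays injective even when $a$ occurs with multiplicity greater than one. A representation may also share several values with a single neighbour, but this only makes the union bound overcount, which is harmless since an upper bound suffices.
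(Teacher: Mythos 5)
Your proof is correct: the paper itself gives no argument for this proposition (it is quoted from Remark 3.10 of [2]), and your maximal-independent-set-plus-degree-bound argument is exactly the standard double counting behind that result, merely phrased in graph language --- a maximal family of pairwise disjoint representations has size at most $g$, every other representation meets one of its at most $gh$ values, and each value $a$ lies in at most $R_{h-1,A}(n-a)\le l$ representations by the injective deletion map. The two points you flag (injectivity of deletion when $a$ repeats, and counting through $Set(\bar{x})$) are handled correctly, so nothing is missing.
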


\noindent By using the definition of $B$, the fact that $B_{2}^{*}[1] = B_{2}[1]$ and (6), (7) it follows that 
\[
B \in B_{2}[1] \cap B_{3}^{*}[1] \cap \dots{} \cap B_{h}^{*}[1] 
\cap B_{h+1}^{*}[4h+1] \cap \dots{} \cap B_{2h}^{*}[4h+1].
\]
Applying Proposition 2 with $g = l = 1$ we get by induction that for every $2 \le s \le h$
if $B \in B^*_{s}[1] \cap B_{s-1}[1]$ then $B \in B_{s}[1]$, thus $B$ is a $B_{h}[1]$ set.
Applying Proposition 2 with $g = 4h + 1$, $l = 1$ we get that $B \in B_{h+1}[4h+1]$. Using Proposition 2
again with $g = 4h + 1$, $l = 4h+1$ we get that if $B \in B^*_{h+2}[4h+1] \cap B_{h+1}[4h+1]$ then $B \in B_{h+2}[(4h+1)(h\cdot 4h + 1)]$.
Continuing this process we obtain that for every $1 < k \le 2h$ we have $B \in B_{k}[g_{k}]$ for some positive integer $g_{k}$.
Let $\cup_{k=1}^{2h}A_{k} = \{d_{1}, \dots{} ,d_{w}\}$ $(d_{1} < \dots{} < d_{w})$. Now we show that $A \in B_{2h}[G]$ where 
\[
G = 2^{w}\cdot \max_{1 < k \le 2h}g_{k}. 
\]
We prove by contradiction.
Assume that there exists a positive integer $n$ with $R_{2h,A}(n) > 2^{w}\cdot \max_{1 < k \le 2h}g_{k}$. Then there exist indices $1 \le i_{1} < i_{2} < \dots{} < i_{j} \le w$
such that the number of representations of in the form $n = d_{i_{1}} + \dots{} + d_{i_{j}} + c_{j+1} + \dots{} + c_{2h}$, where $c_{j+1}, \dots{} ,c_{2h} \in B$ is more than $\max_{1 < k \le 2h}g_{k}$.
It follows that
\[
R_{2h-j,B}(n-(d_{i_{1}} + \dots{} + d_{i_{j}})) > \max_{1 < k \le 2h}g_{k} \ge g_{2h-j}
\]   
which is a contradiction.

Finally, we prove similarly as in [7] that
$B$ is an asymptotic basis of order $2h + 1$, i.e., the deletion of the
``small''elements of $A$ does not destroy its asymptotic basis
property. We prove by contradiction. Assume that
there exist infinitely many positive integers $M$ which cannot be represented as
the sum of $2h + 1$ numbers from $B$. Choose such an $M$ large enough. In view of (5), we have $R_{2h+1,A}(M) > cM^{\frac{1}{4h+1}}$. It
follows from our assumption that every representations of $M$ as the sum of $2h + 1$
numbers from $A$ contains at least one element from
$A \setminus B = \cup_{k=1}^{2h}A_{k}$.
Then by the pigeon hole principle there exists an $y \in \cup_{k=1}^{2h}A_{k}$ which is in at least $\frac{R_{2h+1,A}(M)}{w}$ representations of $M$. As $A \in  B_{2h}[G]$, it follows that with probability 1,
\[
\frac{c_{3}M^{\frac{1}{4h+1}}}{w} < \frac{R_{2h+1,A}(M)}{w} \le R_{2h,A}(M-y) \le G,
\]
\noindent which is a contradiction if $M$ is large enough.


\bigskip
\bigskip


\begin{thebibliography}{99}

\bibitem{CE} \textsc{J. Cilleruelo}.\ \textit{On Sidon sets and asymptotic bases}, Proc. Lond. Math. Soc., \textbf{111} (2015), 1206-1230.
\bibitem{CE} \textsc{J. Cilleruelo, S. Z. Kiss, I. Z. Ruzsa, C. Vinuesa}.\ 
\textit{Generalization of a theorem of Erd\H{o}s and R\'enyi on Sidon sequences}, Random Structures and Algorithms, \textbf{37} (2010), 455-464.
\bibitem{DE} \textsc{J. M. Deshouillers, A. Plagne}.\ \textit{A Sidon basis},
  Acta Mathematica Hungarica, \textbf{123} (2009), 233-238.
\bibitem{ET} \textsc{P. Erd\H{o}s, A. S\'ark\"ozy, V. T. S\'os}.\ \textit{On additive properties of general sequences}, Discrete Mathematics, \textbf{136} (1994), 75-99. 
\bibitem{ES} \textsc{P. Erd\H{o}s, A. S\'ark\"ozy, V. T. S\'os}.\ \textit{On sum sets of Sidon
    sets I.}, Journal of Number Theory, \textbf{47} (1994), 329-347.
\bibitem{HL} \textsc{H. Halberstam, K. F. Roth}.\ \textit{Sequences}, Springer
  - Verlag, New York, 1983.
\bibitem{SB} \textsc{S. Z. Kiss}.\ \textit{On Sidon sets which are asymptotic
    bases}, Acta Mathematica Hungarica, \textbf{128} (2010), 46-58. 
\bibitem{SB} \textsc{S. Z. Kiss}.\ \textit{On generalized Sidon sets which are asymptotic
    bases}, Annales Univ. Sci. Budapest. E\"otv\"os, \textbf{57} (2014), 149-160.
\bibitem{KR} \textsc{S. Z. Kiss, E. Rozgonyi, Cs. S\'andor}.\ \textit{On Sidon sets which are asymptotic
    bases of order 4}, Functiones et Approximatio Comm. Math., \textbf{51} 
(2014), 393-413. 



\end{thebibliography}
\end{document}